\numberwithin{equation}{section}
\newcommand\norm[1]{\left\lVert#1\right\rVert}
\newtheorem{defi}{Definition}[section]
\newtheorem{lem}[defi]{Lemma}
\newtheorem{theorem}[defi]{Theorem}
\newtheorem{propo}[defi]{Proposition}
\newtheorem*{theorem*}{Theorem}
\begin{document}

\title{Random walks in doubly random scenery}
\author{\L ukasz Treszczotko\footnote{Institute of Mathematics, University of Warsaw, Banacha 2 02-097 Warsaw}\\
\href{mailto:lukasz.treszczotko@gmail.com}{lukasz.treszczotko@gmail.com} }

\maketitle

\begin{abstract}
We provide a random walk in random scenery representation of a new class of stable self-similar processes with stationary increments introduced recently by Jung, Owada and Samorodnitsky. In the functional limit theorem they provided only a single instance of this class arose as a limit. We construct a model in which a significant portion of processes in this new class is obtained as a limit. 
\end{abstract}

{\bf Keywords:} local times, L\'{e}vy processes, stable self-similar processes, random walks in random scenery 
\\

\textup{2000} \textit{Mathematics Subject Classification}: \textup{Primary: 60G18} \textup{Secondary: 60F17}

\maketitle

\section{Introduction}

\subsection{Random walks in random scenery}

Our model is based in the framework of \emph{random walks in random scenery models}. They were first considered in~\cite{KS}, where a number of limit theorems regarding the scaling limits of these models were proved. The more specific context in which we will be working was presented in~\cite{DG}. The model considered therein can be briefly sketched as follows. Assume that there is a \emph{user} moving randomly on the \emph{network} (in this paper the network is just $\mathbb{Z}$) which earns random rewards (governed by the random scenery) associated to the points in the network that they visit. The quantity of interest is then the total amount of rewards collected. To be more precise, assume that that the movement of the user is a random walk on $\mathbb{Z}$ which after suitable scaling converges to the $\beta$-stable L\'{e}vy process with $\beta \in (1,2]$. Furthermore, let the random scenery be given by i.i.d. random variables $(\xi_j)_{j\in \mathbb{Z}}$ which belong to the normal domain of attraction of a symmetric strictly stable distribution with index of stability $\alpha \in (0,2]$. Then the \emph{random walk in random scenery} is given by
\begin{equation}
Z_n=\sum_{k=1}^n\xi_{S_k},
\end{equation}
where $S_k=\sum_{j=1}^k X_k$ is the random walk determining the movement of the user. If we consider a large number of independent \emph{random walkers} moving in independent random sceneries, then the scaling limit in the corresponding functional limit theorem (see Theorem 1.2 in~\cite{DG}) leads to the process which has the integral representation given by
\begin{equation}\label{G1}
X=\left(\int_{\mathbb{R}\times \Omega'}L_t(x,\omega')M_\alpha(dx,d\omega')\right)_{t\geq 0},
\end{equation}
where $(L_t(x,\omega'))_{t\geq 0,x \in \mathbb{R}}$ is a jointly continuous version of the local time of the symmetric $\beta$-stable L\'{e}vy motion (defined on some probability space $(\Omega',\mathcal{F}',\mathbb{P}')$) and $M_\alpha$ is a symmetric $\alpha$-stable random measure on $\mathbb{R}\times \Omega'$ with control measure $\lambda_1 \otimes \mathbb{P}'$, which is itself defined on some other probability space $(\Omega,\mathcal{F},\mathbb{P})$. 
The process~\eqref{G1} was also obtained in~\cite{SAM1} where it arose as a limit of partial sums of a stationary and infinitely divisible process.

\subsection{The limit process}
Very recently Jung, Owada and Samorodnitsky in their paper~\cite{SAM2}, which was an extension of the model considered in~\cite{SAM1}, introduced a new class of self-similar stable processes whose members have an integral representation given by
\begin{equation}\label{G2}
Y_{\alpha,\widetilde{\beta},\gamma}(t):=\int_{\Omega'\times [0,\infty)}S_\gamma(M_{\widetilde{\beta}}((t-x)_+,\omega'),\omega')dZ_{\alpha,\widetilde{\beta}}(\omega',x),\quad t \geq 0,
\end{equation}
where 
\begin{equation*}
0<\alpha<\gamma\leq 2, 0\leq \widetilde{\beta}< 1,
\end{equation*}
$(S_\gamma(t,\omega'))_{t\geq 0}$ is a symmetric $\gamma$-stable L\'{e}vy motion and $(M_{\widetilde{\beta}}(t,\omega'))_{t\geq 0}$ is an independent $\widetilde{\beta}$-Mittag-Leffler process (see section 3 in~\cite{SAM1} for more on the latter). Both of these processes are defined on a probability space $(\Omega',\mathcal{F}',\mathbb{P}')$. Finally $Z_{\alpha,\beta}$ is a $S\alpha S$ random measure on $\Omega' \times [0,\infty)$ with control measure $\mathbb{P}'\otimes \nu_{\widetilde{\beta}}$, where $\nu_{\widetilde{\beta}}(dx)=(1-\widetilde{\beta})x^{-\widetilde{\beta}}\mathbf{1}_{x\geq 0}dx$. By Proposition 3.2 in~\cite{SAM2} the process $Y_{\alpha,\widetilde{\beta},\gamma}$ is $H$-sssi with Hurst coefficient $H=\widetilde{\beta}/\gamma+(1-\widetilde{\beta})/\alpha$. Here we use $\widetilde{\beta}$ instead of $\beta$ so as not to confuse it with the notation we have adopted for this paper. Similarly as in the proof of (3.10) in~\cite{SAM1} we can show that for $\widetilde{\beta}\in (0,\frac{1}{2})$
\begin{equation}\label{identi}
(Y_{\alpha,\widetilde{\beta},\gamma}(t))_{t\geq 0}\overset{d}{=}c_{\widetilde{\beta}}\Bigg(\int_{\Omega'\times \mathbb{R}}S_\gamma(L_t(x,\omega'),\omega')dZ_\alpha(\omega',x)\Bigg)_{t\geq 0},
\end{equation}
where $c_{\widetilde{\beta}}$ is a constant depending only on $\widetilde{\beta}$, $(L_t(x))_{t\geq 0}$ is the local time of a symmetric $\beta$-stable L\'{e}vy motion defined independent of the process $S_\gamma$ (both defined on $(\Omega',\mathcal{F}',\mathbb{P}')$), $\beta=(1-\widetilde{\beta})^{-1}$ and $Z_\alpha$ is a symmetric $\alpha$-stable random meaure on $(\Omega',\mathbb{R})$ with control measure $\mathbb{P}'\otimes \lambda_1$.\\

The limit process obtained in~\cite{SAM2} corresponds to $\gamma=2$ in~\eqref{G2} it is our purpose to provide a model in which the scaling limit is given by processes of the form~\eqref{identi} for any allowable choice of parameters $\alpha,\beta$ ans $\gamma$.

\section{Description of the model and the result}\label{DR}

Imagine that each $x \in \mathbb{Z}$ is associated with a reward (or punishment) given by $\xi_x$ which takes integer values. Now imagine a \emph{random walker} moving on $\mathbb{Z}$ independently of the rewards and starting at $0$. Before the movement the walker generates a strategy $Y_1,Y_2,\ldots$ of i.i.d. random variables which are independent of the $\xi_x$'s and his movement. Now, any time the walker visits a point $x$ he gets a reward (or receives punishment) given by $Y_k\times \xi_x$, where $k$ is number of times that the walker has already stayed at $x$ (including this time). Thus the amount by which a potential reward is being multiplied depends only on the number of the visits. The total reward/punishment at time $n$ in this scheme is given by
\begin{equation}
\sum_{x\in \mathbb{Z}}\Big(\sum_{k=1}^{N_n(x)}Y_k\Big)\xi_x,
\end{equation}
where 
\begin{equation}
N_n(x):=\sum_{k=1}^n\mathbf{1}_{\{S_k=x\}}
\end{equation}
denotes the number of visits to the point $x\in \mathbb{Z}$ up to time $n \in \mathbb{N}$ and $S_k=X_1+\ldots X_k$ is the random walk performed. 
\\

The specific context in which our model is investigated is an extension of the one presented in Section 1.2 of~\cite{DG} and goes as follows. Let $(S_n)_{n\geq 0}$ be a random walk on $\mathbb{Z}$ such that
\begin{equation}
\frac{1}{a_n}S_n \Rightarrow Z_\beta, 
\end{equation}
where $Z_\beta$ has symmetric $\beta$-stable  distribution $1<\beta<2$. In particular, we assume that the random walk is recurrent. In the most general setting $(a_n)_{n\geq 1}$ is regularly varying at infinity with exponent $\beta$. We will assume more, i.e., that $(S_n)$ is in the \emph{normal domain of attraction} of $Z_\beta$ and take $a_n=n^{1/\beta}$. Let $\xi = (\xi_x)_{x\in \mathbb{Z}}$ be a family of i.i.d. random variable such that
\begin{equation}
\frac{1}{n^{1/\alpha}}\sum_{x=0}^n \xi_x \Rightarrow Z_\alpha,
\end{equation}
where $Z_\alpha$ is a symmetric $\alpha$-stable random variable with $\alpha \in (0,2)$. What is different from the model considered in~\cite{DG} is that we introduce more randomness to the model with an i.i.d. sequence $(Y_n)_{n \geq 1}$ such that
\begin{equation}
\frac{1}{n^\gamma}\sum_{j=1}^n Y_j \Rightarrow Z_\gamma,
\end{equation}
where $Z_\gamma$ has a symmetric $\gamma$-stable distribution with $\alpha<\gamma \leq 2$. In the original formulation of~\cite{DG} all the $Y_n$'s are equal to one. For technical reasons we will also assume that
\begin{equation}\label{COND}
\sup_{k \in \mathbb{N}}\mathbb{E}\Bigg|\frac{Y_1+\ldots +Y_k}{k^{1/\gamma}}\Bigg|^{\alpha \kappa} < \infty,
\end{equation}
for some $\kappa>1$. The above condition can be viewed as a restriction on the distributution of $Y_1$. A sufficient condition for~\eqref{COND} to hold is given in the lemma below. We denote the characteristic function of $Y_1$ by $\phi$.

\begin{lem}\label{Alem}
If $\alpha>1$, then~\eqref{COND} is satisfied as long as 
\begin{equation}
\int_r^\infty \frac{|\phi'(\theta)|}{\theta^{\alpha \kappa}}d\theta<\infty
\end{equation}
for some $r>0$ and there is a finite constant $K$ such that $|\phi'(\theta)|\leq K|\theta|^{\gamma-1}$ for $\theta$ in some neighbourhood of zero.
\end{lem}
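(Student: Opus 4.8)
The plan is to control the absolute moment of $T_k := k^{-1/\gamma}(Y_1+\cdots+Y_k)$ through its characteristic function $\psi_k(\theta) = \phi(k^{-1/\gamma}\theta)^k$, using the classical Fourier representation of fractional moments. Write $p := \alpha\kappa$; since $\gamma>\alpha$ one may take $\kappa \in (1,\gamma/\alpha)$, so that $p \in (1,\gamma) \subseteq (1,2)$, and in particular $\mathbb{E}|Y_1|^p<\infty$ because $Y_1$ lies in the normal domain of attraction of a $\gamma$-stable law with $\gamma>1$; this already guarantees $\mathbb{E}|T_k|^p<\infty$ for every $k$. Starting from $\mathbb{E}|T_k|^p = C_p \int_{\mathbb{R}} |\theta|^{-1-p}\bigl(1-\mathrm{Re}\,\psi_k(\theta)\bigr)\,d\theta$, valid for $0<p<2$, I would integrate by parts. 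The boundary terms vanish: near the origin $1-\mathrm{Re}\,\psi_k(\theta) = O(|\theta|^\gamma)$ with $\gamma>p$, and at infinity the factor $|\theta|^{-p}$ kills the bounded remainder since $p>0$. This turns the representation into $\mathbb{E}|T_k|^p \le C_p'\int_0^\infty \theta^{-p}\,|\psi_k'(\theta)|\,d\theta$, which is exactly where the derivative $\phi'$ of the hypothesis enters.

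Next I would compute $\psi_k'(\theta) = k^{1-1/\gamma}\,\phi(k^{-1/\gamma}\theta)^{k-1}\,\phi'(k^{-1/\gamma}\theta)$ and substitute $u = k^{-1/\gamma}\theta$, yielding $\mathbb{E}|T_k|^p \le C_p'\, k^{1-p/\gamma}\int_0^\infty u^{-p}\,|\phi(u)|^{k-1}\,|\phi'(u)|\,du$. The prefactor $k^{1-p/\gamma}$ diverges (as $p<\gamma$), so the whole point is that the integral concentrates near $u=0$ and decays at precisely the compensating rate. I would split the $u$-integral at the radius $r$ of the hypothesis. On $(0,r)$ the local bound $|\phi'(u)|\le K u^{\gamma-1}$ together with the domain-of-attraction expansion $|\phi(u)|\le \exp(-c u^\gamma)$ (coming from $\phi(u)=1-c|u|^\gamma(1+o(1))$) bounds the integrand by $K\,e^{-c(k-1)u^\gamma} u^{\gamma-1-p}$; rescaling $v=k^{1/\gamma}u$ makes the powers of $k$ cancel exactly (the exponents satisfy $1-p/\gamma-(\gamma-1-p)/\gamma-1/\gamma=0$), leaving $\int_0^\infty e^{-cv^\gamma}v^{\gamma-1-p}\,dv$, finite precisely because $p<\gamma$. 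Hence the near-origin contribution is bounded uniformly in $k$.

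For the tail $u\ge r$ I would invoke the integrability hypothesis $\int_r^\infty |\phi'(\theta)|\theta^{-\alpha\kappa}\,d\theta<\infty$. Returning to the $\theta$-variable this region corresponds to $\theta\ge k^{1/\gamma}r$, and since $p>1$ the elementary bound $|\psi_k'(\theta)|\le \mathbb{E}|T_k|$ together with $\sup_k \mathbb{E}|T_k|<\infty$ (again from $\gamma>1$) shows $\int_{k^{1/\gamma}r}^\infty \theta^{-p}|\psi_k'(\theta)|\,d\theta = O(k^{(1-p)/\gamma})\to 0$; the integrability assumption then guarantees the corresponding $u$-integral is finite so that the tail stays bounded in $k$. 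Combining the two regimes gives $\sup_k\mathbb{E}|T_k|^{\alpha\kappa}<\infty$, which is~\eqref{COND}.

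The main obstacle I anticipate is making the near-origin estimate genuinely uniform in $k$: one must reconcile the diverging prefactor $k^{1-p/\gamma}$ with the decay of $|\phi|^{k-1}$, and the delicate point is that the exponent cancellation succeeds only because the power of $|\phi'|$ near zero is exactly $\gamma-1$, matching the $\gamma$-stable scaling. Establishing the uniform exponential-type domination $|\phi(u)|\le e^{-cu^\gamma}$ on a fixed neighbourhood of the origin, rather than merely the pointwise asymptotics, is the technical heart of the argument.
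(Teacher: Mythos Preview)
Your argument is essentially the paper's: it too represents the moment as $c\int_0^\infty \theta^{-p}\lvert\psi_k'(\theta)\rvert\,d\theta$ (by citing a known formula rather than integrating by parts), changes variables to $u=k^{-1/\gamma}\theta$ to extract the factor $k^{1-p/\gamma}$, splits at a fixed radius, and handles the near-zero piece via $\lvert\phi'(u)\rvert\le Ku^{\gamma-1}$, $\lvert\phi(u)\rvert\le e^{-cu^\gamma}$ and exactly the rescaling you describe to make the powers of $k$ cancel. One small caveat on your tail step: the claim $\sup_k\mathbb{E}\lvert T_k\rvert<\infty$ is not an immediate consequence of ``$\gamma>1$'' --- it is essentially the $p=1$ instance of what you are proving --- so it is cleaner to stay in the $u$-variable and use, as the paper does, the damping from $\lvert\phi(u)\rvert^{k-1}$ on $u\ge r$ together with the integrability hypothesis.
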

The proof of Lemma~\ref{Alem} is given in the Appendix.\\

The base for our study is the behaviour of the process
\begin{equation}\label{theZ}
\widetilde{Z}(t):=\sum_{x \in \mathbb{Z}}\Big(\sum_{k=1}^{N_{[t]}(x)}Y_k\Big)\xi_x, \quad t\geq 0.
\end{equation}
We also define the rescaled version of~\eqref{theZ} by
\begin{equation}\label{theD}
D_n(t):=r_n^{-1}\widetilde{Z}(nt),\quad  n \geq 1, i\geq 1, t \geq 0,
\end{equation}
with $r_n=n^{1/\gamma+1/(\alpha\beta) - 1/(\gamma\beta)}.$ 

We are interested  in the scaling limit in which we consider the aggregate behaviour of a large number of independent walkers with independent strategies and having independent environments from which they collect the rewards. More precisely, consider an i.i.d. sequence of processes $\big((D_n^{(i)}(t))_{t\geq 0}\big)_{i=1}^\infty$, $n\geq 1$ and define for $t\geq 0$
\begin{equation}\label{theG}
G_n(t):=\frac{1}{c_n^{1/\alpha}}\sum_{i=1}^{c_n} D_n^{(i)}(t), \quad n\geq 1,
\end{equation}
where $c_n$ is any sequence of positive integers converging to $+\infty$.
Now we may state our result concerning the scaling limit of the above process.

\begin{theorem}\label{THM1}
For any $0<\alpha<\gamma\leq 2$ the process $(G_n(t))_{t\geq 0}$ defined by~\eqref{theG} converges (up to a multiplicative constant) as $n \rightarrow \infty$, in the sense of finite-dimensional distributions, to the process given by~\eqref{identi}.
\end{theorem}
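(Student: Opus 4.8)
The plan is to work directly with characteristic functions and to exploit the i.i.d.\ aggregation structure of \eqref{theG}. Fix times $t_1,\dots,t_m\geq 0$ and reals $\theta_1,\dots,\theta_m$, and write $\langle\theta,G_n\rangle=\sum_{j=1}^m\theta_j G_n(t_j)$. Since the copies $D_n^{(i)}$ are i.i.d.,
\begin{equation*}
\mathbb{E}\exp(i\langle\theta,G_n\rangle)=\Big(\mathbb{E}\exp\big(i\,c_n^{-1/\alpha}\langle\theta,D_n\rangle\big)\Big)^{c_n}.
\end{equation*}
Within a single copy I would first condition on the environment $\mathcal{G}=\sigma((S_k)_k,(Y_k)_k)$. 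Writing $W_{[nt]}(x)=\sum_{k=1}^{N_{[nt]}(x)}Y_k$ and $w^{(n)}_\theta(x)=\sum_{j}\theta_j W_{[nt_j]}(x)$, formula \eqref{theZ} gives $\langle\theta,\widetilde Z(n\cdot)\rangle=\sum_{x}w^{(n)}_\theta(x)\,\xi_x$, a $\mathcal{G}$-measurable linear combination of the i.i.d.\ sceneries. The target is the characteristic function of \eqref{identi}; since $Z_\alpha$ integrates over $\Omega'\times\mathbb{R}$ and the integrand $S_\gamma(L_{t_j}(x,\omega'),\omega')$ is measurable in the index, this characteristic function has the \emph{deterministic} exponent $\exp(-C\,\mathbb{E}_{\omega'}B)$, where $B=\int_{\mathbb R}\big|\sum_j\theta_j S_\gamma(L_{t_j}(x,\omega'),\omega')\big|^\alpha\,dx$.

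Next I would linearise the conditional characteristic function. Writing $a_x=c_n^{-1/\alpha}r_n^{-1}w^{(n)}_\theta(x)$ and using that the $\xi_x$ are symmetric and lie in the normal domain of attraction of an $\alpha$-stable law, so that $\log\mathbb{E}e^{is\xi}=-\sigma|s|^\alpha(1+o(1))$ as $s\to 0$, one obtains
\begin{equation*}
\mathbb{E}\big[e^{i c_n^{-1/\alpha}\langle\theta,D_n\rangle}\,\big|\,\mathcal{G}\big]=\exp\!\Big(-\sigma c_n^{-1}B_n\,(1+o(1))\Big),\qquad B_n:=r_n^{-\alpha}\sum_x\big|w^{(n)}_\theta(x)\big|^\alpha .
\end{equation*}
The $o(1)$ is uniform once $\max_x|a_x|\to 0$, which, since $c_n\to\infty$, reduces to $r_n^{-1}\max_x|w^{(n)}_\theta(x)|=O_{\mathbb P}(1)$, a consequence of the tightness established below. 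The exponent of the aggregated law is then $c_n\log\mathbb{E}_{\mathcal G}\exp(-\sigma c_n^{-1}B_n)$, and since this equals $\log\mathbb{E}\exp(-\sigma c_n^{-1}\sum_{i=1}^{c_n}B_n^{(i)})$ for i.i.d.\ copies $B_n^{(i)}$, it suffices to prove the weak law $c_n^{-1}\sum_{i=1}^{c_n}B_n^{(i)}\to \mathbb{E}_{\omega'}B$ in probability, valid for every $c_n\to\infty$. This is where \eqref{COND} enters: the bound on the $\alpha\kappa$-th moment of the normalised strategy sums, $\kappa>1$, yields $\sup_n\mathbb{E}\,B_n^{\kappa}<\infty$, which delivers both uniform integrability (hence $\mathbb{E}B_n\to\mathbb{E}_{\omega'}B$) and the triangular-array weak law; Lemma~\ref{Alem} supplies a checkable sufficient condition.

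It remains to identify the limit of $B_n$, and this I expect to be the main obstacle. The norming $r_n=n^{1/\gamma+1/(\alpha\beta)-1/(\gamma\beta)}$ is precisely the one making $B_n$ tight: combining the self-similar scaling $N_{[nt]}(x)\approx n^{1-1/\beta}L_t(xn^{-1/\beta})$ of the walk's occupation density (the assumption $\beta>1$ guaranteeing a jointly continuous local time) with the fact that $\sum_{k\le N}Y_k$ is of order $N^{1/\gamma}$, the rescaled weight field should satisfy
\begin{equation*}
\Big(r_n^{-1}w^{(n)}_\theta(\lfloor x n^{1/\beta}\rfloor)\Big)_{x\in\mathbb R}\ \Longrightarrow\ \Big(c'\sum_{j}\theta_j S_\gamma\big(L_{t_j}(x)\big)\Big)_{x\in\mathbb R},
\end{equation*}
after which a Riemann-sum approximation of $n^{-1/\beta}\sum_x(\cdot)$ by $\int_{\mathbb R}(\cdot)\,dx$ gives $B_n\Rightarrow c''B$. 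The delicate points are the joint convergence of the local-time field $(N_{[nt_j]}(\cdot))_j$ with the single $\gamma$-stable partial-sum process of $(Y_k)$ — note that the \emph{same} strategy sequence drives every site, so the limit is one motion $S_\gamma$ composed with the local times, matching the integral representation \eqref{identi} — together with the promotion of this weak convergence to the $L^1$ statement $\mathbb{E}B_n\to\mathbb{E}_{\omega'}B$. The composition $S_\gamma\circ L$ requires a continuous-mapping argument for the subordination of a $\gamma$-stable motion by a continuous occupation density, and the $L^1$ step again leans on the uniform $\alpha\kappa$-moment control of \eqref{COND}. Assembling these pieces yields $\mathbb{E}\exp(i\langle\theta,G_n\rangle)\to\exp(-C\,\mathbb{E}_{\omega'}B)$, which is the finite-dimensional characteristic function of the process in \eqref{identi}, up to the multiplicative constant.
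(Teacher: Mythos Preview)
Your proposal follows the same skeleton as the paper: factor the characteristic function via the i.i.d.\ aggregation, condition on the walk and the strategy to reduce to $\prod_x\lambda(a_x)$, replace $\lambda$ by its stable approximation $\bar\lambda$ to obtain $\exp(-c_n^{-1}B_n)$, and then pass to the limit using $\mathbb{E}B_n\to\mathbb{E}B$ together with the uniform integrability supplied by~\eqref{COND}. Your WLLN formulation of the last step, $c_n^{-1}\sum_iB_n^{(i)}\to\mathbb{E}B$, is equivalent to the paper's direct expansion $\mathbb{E}\exp(-c_n^{-1}B_n)=1-c_n^{-1}\mathbb{E}B+o(c_n^{-1})$ followed by raising to the $c_n$-th power.

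The substantive divergence is in how you propose to identify the limit of $\mathbb{E}B_n$. You aim for a functional limit theorem for the rescaled weight field $r_n^{-1}w_\theta^{(n)}(\lfloor xn^{1/\beta}\rfloor)$ and then a continuous-mapping/subordination argument for $S_\gamma\circ L$. The paper avoids this entirely: it decomposes the partial sums of $(Y_k)$ over the disjoint occupation-time intervals $\big(N_{[nt_{j-1}]}(x),\,N_{[nt_j]}(x)\big]$, obtaining, conditionally on the walk, \emph{independent} blocks; after a Skorohod-representation step these blocks are replaced by i.i.d.\ $S\gamma S$ variables $Z^{(1)},\dots,Z^{(k)}$, and $\gamma$-stability then collapses the whole integrand into $\mathbb{E}|Z^{(1)}|^\alpha$ times a functional of the increments $N_{[nt_j]}(x)-N_{[nt_{j-1}]}(x)$ alone. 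At that point the problem is reduced to the local-time limit already handled in~\cite{DG}, and the paper simply quotes those lemmas. Your route is conceptually natural but technically heavier: making the composition with a jump process rigorous under functional convergence of a continuous time change is delicate, and the Riemann-sum passage to $\int_\mathbb{R}$ requires tail control in $x$ that the paper gets for free from the estimates in~\cite{DG}. The stability trick is the shortcut you are missing.

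One smaller point: your handling of the $o(1)$ in $\log\lambda(s)\approx-\sigma|s|^\alpha$ is too quick. After taking expectation this error must be $o(c_n^{-1})$, not just $o(1)$, so that it survives the $c_n$-th power. The paper isolates this as a separate lemma, bounding $\bigl|\prod_x\lambda-\prod_x\bar\lambda\bigr|\le\sum_x|\lambda-\bar\lambda|$ and then using the uniform integrability of $B_n$; your sketch should make this step explicit rather than absorbing it into the informal ``$(1+o(1))$''.
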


\section{Proof of Theorem~\ref{THM1}}

For clarity we divided the proof of Theorem~\ref{THM1} into a number of lemmas. Basically, we prove the convergence of finite-dimensional distributions by showing the convergence of appropriate characteristic functions. First we will state them and then proceed to their proofs. In order to simplify the notation we put
\begin{equation}
\widetilde{N}_n(x):=\sum_{j=1}^{N_n(x)}Y_j, 
\end{equation}
for $n\in \mathbb{N}$ and $x\in \mathbb{Z}$. Since we are going to work a lot with the characteristic function of $\xi_0$ we introduce the following notation. Let
\begin{equation}
\lambda(u)=\mathbb{E}(exp(iu\xi_0)), \quad u \in \mathbb{R}
\end{equation}
and
\begin{equation}
\bar{\lambda}(u)=exp(-|u|^\alpha), \quad u \in \mathbb{R}.
\end{equation}

Assume that $\theta_1,\ldots,\theta_k\in \mathbb{R}$, $t_1,\ldots, t_k \in  [0,\infty)$ for $k\geq 1$. We want to show the convergence of the characteristic function of $\sum_{j=1}^k \theta_j G_n(t_j)$ to the corresponding characteristic function of the process given by~\eqref{G2}.

The first lemma in this section removes the first layer of randomness in our scheme and expresses the characteristic function in question solely in terms of the random walk and the sequence $(Y_k)_{k\geq 1}$.

\begin{lem}\label{C1}
For the setting as in Section~\ref{DR}
\begin{equation}
\mathbb{E}\big(exp\left(i\sum_{j=1}^k \theta_j G_n(t_j)\right)\big)=\Bigg(\mathbb{E}\Big( \prod_{x \in \mathbb{Z}} \lambda \big(c_n^{-1/\alpha}r_n^{-1}\sum_{j=1}^k \theta_j \sum_{m=1}^{N_{[nt_j]}(x)}Y_m\big)\Big)\Bigg)^{c_n}.
\end{equation}
\end{lem}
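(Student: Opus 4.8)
The plan is to peel off the two outer layers of randomness — the independence across the walkers and then the independence of the scenery $(\xi_x)_{x\in\mathbb{Z}}$ — in two conditioning steps, reducing the whole characteristic function to the single-walker expression on the right-hand side.

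First I would exploit the independence of the copies $\big(D_n^{(i)}\big)_{i=1}^{c_n}$. Writing $\sum_{j=1}^k\theta_j G_n(t_j)=c_n^{-1/\alpha}\sum_{i=1}^{c_n}\sum_{j=1}^k\theta_j D_n^{(i)}(t_j)$ and using that these processes are i.i.d., the characteristic function factorises:
\begin{equation*}
\mathbb{E}\Big(\exp\big(i\sum_{j=1}^k \theta_j G_n(t_j)\big)\Big)=\prod_{i=1}^{c_n}\mathbb{E}\Big(\exp\big(i c_n^{-1/\alpha}\sum_{j=1}^k\theta_j D_n^{(i)}(t_j)\big)\Big)=\Big(\mathbb{E}\Big(\exp\big(i c_n^{-1/\alpha}\sum_{j=1}^k\theta_j D_n(t_j)\big)\Big)\Big)^{c_n}.
\end{equation*}
It then remains only to identify the inner single-walker expectation. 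Substituting the definitions of $D_n$ and $\widetilde{Z}$ and interchanging the finite sum over $j$ with the sum over $x$, the exponent becomes
\begin{equation*}
i c_n^{-1/\alpha}r_n^{-1}\sum_{x\in\mathbb{Z}}\xi_x\sum_{j=1}^k\theta_j\sum_{m=1}^{N_{[nt_j]}(x)}Y_m.
\end{equation*}
The useful observation here is that only finitely many sites are visited up to a finite time, so $N_{[nt_j]}(x)=0$ for all but finitely many $x$; for those $x$ the inner sum is empty and contributes a factor $\lambda(0)=1$. Consequently the product appearing below is almost surely a \emph{finite} product.

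Next I would condition on the pair consisting of the random walk $(S_m)_{m\geq 0}$ and the strategy $(Y_m)_{m\geq 1}$. Given these, all the visit counts $N_{[nt_j]}(x)$ and the weighted sums $\sum_{m=1}^{N_{[nt_j]}(x)}Y_m$ (which are exactly the $\widetilde{N}_{[nt_j]}(x)$) are deterministic, while the scenery $(\xi_x)_{x\in\mathbb{Z}}$ is an independent i.i.d. family. Hence the conditional expectation factorises over $x$, and by the definition of $\lambda$ each factor equals $\lambda\big(c_n^{-1/\alpha}r_n^{-1}\sum_{j=1}^k\theta_j\sum_{m=1}^{N_{[nt_j]}(x)}Y_m\big)$. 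Taking the outer expectation over $(S_m)$ and $(Y_m)$ yields precisely the stated single-walker characteristic function, and combining with the factorisation over $i$ gives the claim.

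The only point requiring any care is the conditional Fubini step — interchanging the outer expectation with the product over $x$. This is immediate in the present situation: the product is finite almost surely, and each factor is a characteristic function bounded in modulus by one, so no integrability obstruction arises and the interchange is fully justified.
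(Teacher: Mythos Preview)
Your argument is correct and is precisely the standard two-step conditioning (factor over the $c_n$ i.i.d. walkers, then condition on the walk and strategy to factor the scenery over $x$) that the paper has in mind; the paper itself does not spell out a proof but simply refers to Lemma~3.4 in~\cite{DG}, whose proof your write-up essentially reproduces.
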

The second lemma says that in the limit only the asymptotic behaviour of $\lambda$ near zero matters. 
\begin{lem}\label{C2}
\begin{equation}\label{ueq}
\mathbb{E}\Bigg(c_n\Big(\prod_{x\in \mathbb{Z}}\lambda(c_n^{-1}r_n^{-1}\sum_{j=1}^k \theta_j \widetilde{N}_{[nt_j]}(x))- \bar{\lambda}(c_n^{-1}r_n^{-1}\sum_{j=1}^k \theta_j \widetilde{N}_{[nt_j]}(x))\Big)\Bigg)
\end{equation}
converges to $0$ as $n \rightarrow \infty$.
\end{lem}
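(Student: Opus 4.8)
The plan is to show that the difference between the product of the $\lambda$ terms and the product of the $\bar{\lambda}$ terms, after multiplication by $c_n$, vanishes in expectation. First I would recall that $\lambda$ and $\bar{\lambda}$ agree to leading order near zero: since $\xi_0$ lies in the normal domain of attraction of a symmetric $\alpha$-stable law, we have $\lambda(u) = 1 - |u|^\alpha + o(|u|^\alpha)$ as $u \to 0$, while $\bar{\lambda}(u) = \exp(-|u|^\alpha) = 1 - |u|^\alpha + o(|u|^\alpha)$, so that $|\lambda(u) - \bar{\lambda}(u)| = o(|u|^\alpha)$ near the origin. The arguments appearing in~\eqref{ueq} are $u_x := c_n^{-1}r_n^{-1}\sum_{j=1}^k \theta_j \widetilde{N}_{[nt_j]}(x)$, which are uniformly small as $n\to\infty$ because of the $c_n^{-1}$ factor, so the small-$u$ expansion is the right regime.

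Next I would use the elementary telescoping bound for products: for complex numbers $a_x, b_x$ of modulus at most one,
\begin{equation}
\Big|\prod_{x} a_x - \prod_x b_x\Big| \leq \sum_x |a_x - b_x|.
\end{equation}
Applying this with $a_x = \lambda(u_x)$ and $b_x = \bar\lambda(u_x)$ reduces the problem to controlling $c_n \sum_{x\in\mathbb{Z}} \mathbb{E}|\lambda(u_x) - \bar\lambda(u_x)|$. Here I would separate the sum over $x$ into the finitely many lattice points actually visited by the walk up to time $[n t_k]$, since $\widetilde{N}_{[nt_j]}(x) = 0$ (hence $u_x = 0$ and the summand vanishes) for all unvisited $x$. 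For the visited points I would insert the bound $|\lambda(u_x) - \bar\lambda(u_x)| \leq \varepsilon |u_x|^\alpha$ valid uniformly once $|u_x| \leq \delta_n$ with $\delta_n \to 0$, reducing everything to estimating $c_n\, c_n^{-\alpha} r_n^{-\alpha} \sum_{x} \mathbb{E}\big|\sum_{j} \theta_j \widetilde{N}_{[nt_j]}(x)\big|^\alpha$.

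The crux is then the moment estimate $\sum_{x\in\mathbb{Z}} \mathbb{E}\big|\sum_j \theta_j \widetilde{N}_{[nt_j]}(x)\big|^\alpha = O(r_n^\alpha)$, so that after multiplying by $c_n^{1-\alpha}r_n^{-\alpha}$ and letting $n\to\infty$ (note $c_n^{1-\alpha}\to 0$ when $\alpha>1$, and more care is needed otherwise) the whole expression tends to zero. To obtain this I would condition on the random walk $(S_k)$ and the local-time structure, use the independence of $(Y_m)$ from the walk, and invoke the uniform moment bound~\eqref{COND}: conditionally on $N_{[nt_j]}(x) = m$, the quantity $\widetilde{N}_{[nt_j]}(x)$ is a sum of $m$ i.i.d.\ copies of $Y_1$, so $\mathbb{E}|\widetilde{N}_{[nt_j]}(x)|^{\alpha}$ is bounded by a constant times $\mathbb{E}(N_{[nt_j]}(x))^{\alpha/\gamma}\cdot(\text{const})$ thanks to~\eqref{COND}, giving the correct scaling $r_n^\alpha = n^{\alpha(1/\gamma + 1/(\alpha\beta) - 1/(\gamma\beta))}$ once the known local-time asymptotics $\sum_x \mathbb{E}(N_{[n]}(x))^{\alpha/\gamma} \asymp n^{\alpha/\gamma + 1 - \alpha/(\gamma\beta)}$ for a $\beta$-stable walk are used.

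The main obstacle I anticipate is precisely this last moment computation: making rigorous the interchange of the small-$u$ expansion uniform over all visited sites (the error term $o(|u|^\alpha)$ must be dominated uniformly in $x$, which requires the uniform smallness of the $u_x$ and possibly a truncation argument separating sites with atypically large local time), and then matching the powers of $n$ exactly so that the $c_n$ prefactor is absorbed. The condition~\eqref{COND} with $\kappa>1$ is designed to give integrability and uniformity in the number of visits, and I expect it will be used both to control the $\alpha$-th conditional moment of $\widetilde{N}$ uniformly and to justify dominated convergence; handling the regime $\alpha\le 1$, where $c_n^{1-\alpha}$ does not decay, will require extracting the decay instead from the product structure and the vanishing of $u_x$, which is the delicate point.
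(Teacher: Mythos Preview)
Your overall scaffolding matches the paper's proof: the telescoping product bound, the asymptotic $|\lambda(u)-\bar\lambda(u)|=o(|u|^\alpha)$, and the reduction to controlling $\sum_x|U_n(x)|^\alpha$ with $U_n(x):=r_n^{-1}\sum_j\theta_j\widetilde N_{[nt_j]}(x)$. However, there is a scaling slip (inherited from a typo in the displayed statement of the lemma): the argument of $\lambda$ and $\bar\lambda$ should be $c_n^{-1/\alpha}U_n(x)$, not $c_n^{-1}U_n(x)$---compare with Lemma~\ref{C1}. With the correct exponent, $c_n\,|c_n^{-1/\alpha}U_n(x)|^\alpha=|U_n(x)|^\alpha$, so the $c_n$ prefactor cancels \emph{exactly} and there is no residual factor $c_n^{1-\alpha}$ to exploit. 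Your proposed mechanism of decay via $c_n^{1-\alpha}\to0$ for $\alpha>1$ is therefore spurious, and the difficulty you flag for $\alpha\le1$ is an artifact of this error rather than a genuine obstacle.

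The actual source of decay is the $o(|u|^\alpha)$ itself, combined with uniform integrability. The paper sets $g(v):=|v|^{-\alpha}|\lambda(v)-\bar\lambda(v)|$ (bounded, continuous, $g(0)=0$) and rewrites the bound as $\mathbb E\big(\sum_x|U_n(x)|^\alpha\,g(c_n^{-1/\alpha}U_n(x))\big)$. Splitting on $\{|g|<\varepsilon\}$ versus $\{|c_n^{-1/\alpha}U_n(x)|\ge\delta\}$, the first piece is at most $\varepsilon\,\mathbb E(B_n)$ with $B_n=\sum_x|U_n(x)|^\alpha$; the second is at most $\|g\|_\infty\,\mathbb E\big(B_n\mathbf 1_{\{B_n\ge c_n\delta^\alpha\}}\big)$, using the inclusion $\{|U_n(x)|^\alpha\ge c_n\delta^\alpha\}\subseteq\{B_n\ge c_n\delta^\alpha\}$, and this tends to zero by uniform integrability of $(B_n)$. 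That uniform integrability is precisely Lemma~\ref{uilem} (established via~\eqref{COND} with $\kappa>1$), and it is the missing ingredient in your sketch. Your moment computation showing $\mathbb E(B_n)=O(1)$ is correct in spirit and suffices for the first piece, but by itself (essentially the case $\kappa=1$) it does not control the tail; you need the full $L^\kappa$-bound to finish.
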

The third lemma is the backbone of the whole proof.
\begin{lem}\label{C3}
Let 
\begin{equation}
B_n:= \sum_{x\in \mathbb{Z}}\left|r_n^{-1}\sum_{j=1}^k \theta_j\sum_{m=1}^{N_{[nt_j]}(x)}Y_m\right|^{\alpha}, \quad n \geq 1.
\end{equation}
Then,
\begin{equation}\label{C3_B}
\lim_{n\rightarrow \infty}\mathbb{E}(B_n)=c(\alpha)\mathbb{E}\left(\int_\mathbb{R}\left|\sum_{j=1}^k \theta_j Y(L_{t_j}(x))\right|^\alpha dx\right),
\end{equation}
and 
\begin{equation}\label{bf}
\mathbb{E}(exp(-c_n^{-1}B_n))= 1-c_n^{-1}c(\alpha)\mathbb{E}(B)+o(c_n^{-1}).
\end{equation}
Here $B = \int_\mathbb{R}|\sum_{j=1}^k \theta_j Y(L_{t_j}(x)|^\alpha dx$ and $c(\alpha)$ is a constant depending only on $\alpha$.
\end{lem}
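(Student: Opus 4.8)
The plan is to treat the two assertions by conditioning throughout on the random walk $(S_m)$ and integrating out the strategy $(Y_m)$ first. Write $g_\alpha(n_1,\dots,n_k):=\mathbb{E}_Y\bigl|\sum_{j=1}^k\theta_j\sum_{m=1}^{n_j}Y_m\bigr|^\alpha$ for the $Y$-expectation at prescribed visit counts and $h(s_1,\dots,s_k):=\mathbb{E}_Y\bigl|\sum_{j=1}^k\theta_j Y(s_j)\bigr|^\alpha$ for its continuous counterpart, where $Y=S_\gamma$ is the limiting $\gamma$-stable L\'{e}vy motion. Conditioning on the walk gives the exact identity
\[
\mathbb{E}(B_n)=\mathbb{E}_S\Bigl(\sum_{x\in\mathbb{Z}}r_n^{-\alpha}\,g_\alpha\bigl(N_{[nt_1]}(x),\dots,N_{[nt_k]}(x)\bigr)\Bigr).
\]
For~\eqref{C3_B} I would use two inputs. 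First, the functional limit theorem for the partial sums of $(Y_m)$, namely $M^{-1/\gamma}\sum_{m=1}^{[Ms]}Y_m\Rightarrow Y(s)$ jointly in the nested arguments $s=s_j$, together with the uniform integrability furnished by~\eqref{COND}, upgrades this to moment convergence $M^{-\alpha/\gamma}g_\alpha([Ms_1],\dots,[Ms_k])\to h(s_1,\dots,s_k)$ and supplies the uniform bound $g_\alpha(n_1,\dots,n_k)\le C(\max_j n_j)^{\alpha/\gamma}$. Second, the convergence of the scaled occupation field of the walk to the local time of the limiting $\beta$-stable motion, $n^{-(1-1/\beta)}N_{[nt_j]}([a_n y])\to L_{t_j}(y)$ jointly in $j$ and uniformly in $y$ on compacts, which is classical for walks in the normal domain of attraction of $Z_\beta$. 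Setting $M=n^{1-1/\beta}$, $x=[a_n y]$, and using the defining relation $r_n^{-\alpha}(n^{1-1/\beta})^{\alpha/\gamma}=n^{-1/\beta}=a_n^{-1}$, the inner sum becomes a Riemann sum of mesh $a_n^{-1}$ converging to $\int_\mathbb{R}h(L_{t_1}(y),\dots,L_{t_k}(y))\,dy$; taking expectations over the walk then yields $\lim_n\mathbb{E}(B_n)=c(\alpha)\mathbb{E}(B)$, the constant $c(\alpha)$ recording the normalization in the domain-of-attraction asymptotics and being the same constant that appears in both displayed equations of the lemma.

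To make this double limit rigorous and to pass the expectation over the walk through the limit, I would show that $\{B_n\}$ is bounded in $L^\kappa$ for the $\kappa>1$ of~\eqref{COND}, hence uniformly integrable. Using $|\sum_j\theta_j a_j|^\alpha\le C\sum_j|a_j|^\alpha$ one reduces to a single time $t$, so it suffices to bound $A_n:=r_n^{-\alpha}\sum_x|\widetilde N_{[nt]}(x)|^\alpha$. By H\"{o}lder applied to the at most $R_n$ nonzero summands, with $R_n$ the range of the walk (of order $a_n$), one has $A_n^\kappa\le r_n^{-\alpha\kappa}R_n^{\kappa-1}\sum_x|\widetilde N_{[nt]}(x)|^{\alpha\kappa}$; the conditional estimate $\mathbb{E}_Y|\widetilde N_{[nt]}(x)|^{\alpha\kappa}\le C\,N_{[nt]}(x)^{\alpha\kappa/\gamma}$ from~\eqref{COND}, together with the scaling identity $r_n^{-\alpha\kappa}(n^{1-1/\beta})^{\alpha\kappa/\gamma}=a_n^{-\kappa}$ and the occupation-field estimates, collapses the exponents to $O(1)$ and gives $\sup_n\mathbb{E}(A_n^\kappa)<\infty$. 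This is exactly where the margin $\kappa>1$ enters, and the resulting domination lets a.s.\ convergence of the Riemann sums be promoted to convergence in $L^1$.

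For the expansion~\eqref{bf} I would use the elementary bound $0\le e^{-u}-1+u\le \tfrac12u^2\wedge u$ for $u\ge0$, which gives, for any fixed $M$,
\[
\Bigl|\mathbb{E}\bigl(e^{-c_n^{-1}B_n}\bigr)-1+c_n^{-1}\mathbb{E}(B_n)\Bigr|\le \tfrac12 c_n^{-2}\mathbb{E}\bigl(B_n^2\mathbf 1_{B_n\le M}\bigr)+c_n^{-1}\mathbb{E}\bigl(B_n\mathbf 1_{B_n>M}\bigr).
\]
The first term is $O(c_n^{-2}M)=o(c_n^{-1})$ for fixed $M$, while the second is at most $c_n^{-1}\sup_n\mathbb{E}(B_n\mathbf 1_{B_n>M})$, so that after multiplying the whole remainder by $c_n$ and letting first $n\to\infty$ and then $M\to\infty$ the bound tends to $0$ by the uniform integrability established above; hence the remainder is $o(c_n^{-1})$. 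Combining this with $c_n^{-1}\mathbb{E}(B_n)=c_n^{-1}c(\alpha)\mathbb{E}(B)+o(c_n^{-1})$ from the first part yields~\eqref{bf}, and crucially only $\kappa>1$ (not a second moment of $B_n$) is needed, which is essential since the result must hold for an arbitrary sequence $c_n\to\infty$. I expect the main obstacle to be the rigorous interchange of the two scaling limits in the first part: the site-wise central limit theorem for the $Y$-sums requires $N_{[nt_j]}(x)$ to be large, which fails near the boundary of the walk's range, so the contribution of lightly visited sites must be shown negligible — precisely what the uniform moment bound from~\eqref{COND} and the continuity of $h$ with $h(0)=0$ are used to control.
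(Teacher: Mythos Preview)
Your proposal is correct and shares the paper's overall architecture: condition on the walk, pass the $Y$-sums to their $\gamma$-stable limit, pass rescaled occupation times to local times, and appeal throughout to the $L^\kappa$ bound (your uniform-integrability argument is exactly Lemma~\ref{uilem}, proved identically). The execution of~\eqref{C3_B} differs. You work directly with the conditional moment $g_\alpha$ and its continuous limit $h$, leaving the double limit to be justified abstractly via the uniform bound $g_\alpha(n_\bullet)\le C(\max_j n_j)^{\alpha/\gamma}$ and continuity of $h$. The paper instead first Abel-sums, writing $\sum_j\theta_j\sum_{m=1}^{N_{[nt_j]}(x)}Y_m=\sum_{j}(\theta_j+\cdots+\theta_k)\sum_{m=N_{[nt_{j-1}]}(x)+1}^{N_{[nt_j]}(x)}Y_m$ so that, conditionally on the walk, the $k$ blocks of $Y$'s are independent; it then invokes Skorokhod to replace each normalized block by its almost-sure $S\gamma S$ limit $Z^{(j)}$, defines an explicit intermediate $C_n$, and proves $\mathbb{E}(B_n)-\mathbb{E}(C_n)\to0$ via the inequality $|a^\alpha-b^\alpha|\le\alpha|a-b|(a^{\alpha-1}+b^{\alpha-1})$ combined with a spatial truncation at $|x|\le K$ and the $L^1$ local-time estimates of~\cite{DG}. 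This makes the ``lightly visited sites'' issue you flag fully explicit, at the cost of heavier bookkeeping; your route is cleaner but requires filling in the uniformity you acknowledge as the main obstacle. For~\eqref{bf} the paper truncates at the $n$-dependent level $c_n^\delta$ rather than at a fixed $M$, but the two arguments are interchangeable and both rest solely on the $L^\kappa$ uniform integrability.
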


It is evident that given the lemmas above, Theorem~\ref{THM1} follows immediately (see the proof of Theorem 1.2 in~\cite{DG}). First, however, we will show that the random variables $B_n$, $n \in \mathbb{N}$ introduced in the formulation of Lemma~\ref{C3} are uniformly integrable. We do this by showing that $\mathbb{E}|B_n|^\kappa$ is bounded uniformly in $n \in \mathbb{N}$ for some $\kappa>1$. 

\begin{lem}\label{uilem}
Assume that
\begin{equation}\label{mbound}
\sup_{k \in \mathbb{N}}\mathbb{E}\Bigg|\frac{Y_1+\ldots +Y_k}{k^{1/\gamma}}\Bigg|^{\alpha \kappa} < \infty.
\end{equation}
Then there is a constant $C$, independent of $n \in \mathbb{N}$, such that for all $\kappa>1$ sufficiently close  to $1$ we have
\begin{equation}
\mathbb{E}(B_n^\kappa) \leq C.
\end{equation}
\end{lem}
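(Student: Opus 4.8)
The plan is to bound $\mathbb{E}(B_n^\kappa)$ by conditioning on the random walk, so that the only randomness left inside is carried, on one side, by the strategy $(Y_m)$ and, on the other, by the geometry of the local times $N_{[nt]}(x)$. First I would reduce to a single time. Using the elementary inequality $\bigl|\sum_{j=1}^k a_j\bigr|^\alpha \le k^{(\alpha-1)_+}\sum_{j=1}^k|a_j|^\alpha$ (subadditivity when $\alpha\le 1$, the power-mean inequality when $\alpha>1$) inside each summand over $x$, followed by $\bigl(\sum_{j=1}^k b_j\bigr)^\kappa\le k^{\kappa-1}\sum_{j=1}^k b_j^\kappa$, one bounds $B_n^\kappa$ by a constant (depending only on $k,\alpha,\kappa$ and the $\theta_j$) times $\sum_{j=1}^k (A_n^{(j)})^\kappa$, where $A_n^{(j)}:=r_n^{-\alpha}\sum_{x}\bigl|\sum_{m=1}^{N_{[nt_j]}(x)}Y_m\bigr|^\alpha$. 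Hence it suffices to bound $\mathbb{E}\bigl((A_n^{(j)})^\kappa\bigr)$ uniformly in $n$ for each fixed $j$; write $t=t_j$ and $n_x=N_{[nt]}(x)$.

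Next I would condition on the walk, which freezes the local times $n_x$. Since $\kappa>1$, Minkowski's inequality in $L^\kappa$ applied to the nonnegative summands $c_x:=\bigl|\sum_{m=1}^{n_x}Y_m\bigr|^\alpha$ gives
\begin{equation*}
\Bigl\|\textstyle\sum_x c_x\Bigr\|_\kappa \le \sum_x \|c_x\|_\kappa = \sum_x \Bigl(\mathbb{E}\bigl|\textstyle\sum_{m=1}^{n_x}Y_m\bigr|^{\alpha\kappa}\Bigr)^{1/\kappa}.
\end{equation*}
Crucially, Minkowski requires no independence, so the fact that the $c_x$ share the single nested sequence $Y_1,Y_2,\dots$ is harmless; the hypothesis~\eqref{mbound} is used only marginally, for each fixed site. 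Indeed $\mathbb{E}\bigl|\sum_{m=1}^{n_x}Y_m\bigr|^{\alpha\kappa}=n_x^{\alpha\kappa/\gamma}\,\mathbb{E}\bigl|\tfrac{Y_1+\cdots+Y_{n_x}}{n_x^{1/\gamma}}\bigr|^{\alpha\kappa}\le M\,n_x^{\alpha\kappa/\gamma}$ with $M:=\sup_k\mathbb{E}|\cdots|^{\alpha\kappa}<\infty$. Writing $\rho:=\alpha/\gamma\in(0,1)$ (here $\alpha<\gamma$ is used), this yields $\mathbb{E}\bigl[(\sum_x c_x)^\kappa\mid\text{walk}\bigr]\le M\bigl(\sum_x n_x^{\rho}\bigr)^\kappa$, and after taking expectations $\mathbb{E}\bigl((A_n^{(j)})^\kappa\bigr)\le M\,r_n^{-\alpha\kappa}\,\mathbb{E}\bigl[(\sum_x N_{[nt]}(x)^{\rho})^\kappa\bigr]$.

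The heart of the matter is then the estimate $\mathbb{E}\bigl[(\sum_x N_{[nt]}(x)^\rho)^\kappa\bigr]\le C\,r_n^{\alpha\kappa}$. Let $R_n$ be the range (number of distinct sites visited up to time $[nt]$). Since $\rho\in(0,1)$, Hölder's inequality on the counting measure over visited sites gives
\begin{equation*}
\sum_x N_{[nt]}(x)^\rho \le R_n^{1-\rho}\Bigl(\sum_x N_{[nt]}(x)\Bigr)^\rho = R_n^{1-\rho}\,[nt]^\rho,
\end{equation*}
so that $\mathbb{E}\bigl[(\sum_x N_{[nt]}(x)^\rho)^\kappa\bigr]\le [nt]^{\rho\kappa}\,\mathbb{E}\bigl(R_n^{(1-\rho)\kappa}\bigr)$. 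For $\kappa$ sufficiently close to $1$ one has $(1-\rho)\kappa<1$, so $x\mapsto x^{(1-\rho)\kappa}$ is concave and Jensen gives $\mathbb{E}\bigl(R_n^{(1-\rho)\kappa}\bigr)\le(\mathbb{E} R_n)^{(1-\rho)\kappa}\le C\,n^{(1-\rho)\kappa/\beta}$, using the classical growth $\mathbb{E} R_n=O(n^{1/\beta})$ for a walk in the normal domain of attraction of $Z_\beta$, $1<\beta<2$. Combining exponents, the bound is of order $n^{\kappa(\rho+(1-\rho)/\beta)}=n^{\kappa(\rho(1-1/\beta)+1/\beta)}=r_n^{\alpha\kappa}$, which exactly cancels the prefactor $r_n^{-\alpha\kappa}$ and leaves a constant independent of $n$.

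The main obstacle, and the reason the statement is restricted to $\kappa$ near $1$, is precisely this last passage from the power-sum of local times to the range. For larger $\kappa$ the exponent $(1-\rho)\kappa$ exceeds $1$, the elementary Jensen bound fails, and one would instead need genuine (super-linear) moment control of $R_n$, i.e. concentration or large-deviation estimates for the range of the stable walk. Keeping $(1-\rho)\kappa<1$ sidesteps this entirely, and since $\rho>0$ there is always a nontrivial window $\kappa\in(1,(1-\rho)^{-1})$ in which both~\eqref{mbound} (via Lyapunov's inequality, from the given exponent) and the concavity bound are available.
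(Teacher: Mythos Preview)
Your proposal is correct and follows essentially the same route as the paper's proof. Both arguments reduce to a single time, condition on the walk to invoke the moment bound~\eqref{mbound} sitewise, then arrive at the pivotal estimate $\mathbb{E}(B_n^\kappa)\le M\,r_n^{-\alpha\kappa}\,\mathbb{E}\bigl[R_{[nt]}^{(1-\rho)\kappa}\bigr]\,[nt]^{\rho\kappa}$ via H\"older on the counting measure over visited sites, and finish with Jensen on the concave power of the range together with $\mathbb{E} R_{[nt]}=O(n^{1/\beta})$. The only cosmetic difference is that the paper first uses the pointwise power-mean bound $\bigl(\sum_x c_x\bigr)^\kappa\le R^{\kappa-1}\sum_x c_x^\kappa$ and then H\"older with exponent $\gamma/(\alpha\kappa)$, whereas you use Minkowski in $L^\kappa$ followed by H\"older with exponent $1/\rho$; the two routes yield the identical exponent $R^{(1-\rho)\kappa}$ on the range, and the constraint $(1-\rho)\kappa<1$ needed for the final Jensen step coincides with the paper's $\kappa\le\gamma/(\gamma-\alpha)$.
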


\begin{proof}[Proof of Lemma~\ref{uilem}]
It is enough to prove the lemma with $k=1$ and $\theta_1=1$. Fix $n\in \mathbb{N}$ and $t\geq 0$. Let $x_1,\ldots, x_{s_n}$ be the points in the range of the random walk up to time $[nt]$ taken in the increasing order with respect to $N_{[nt]}(x_i)$. We can write
\begin{equation}
B_n = \frac{1}{r_n^\alpha}\left(\big|Y_1 +\ldots + Y_{N_{[nt]}(x_1)}\big|^\alpha + \ldots + \big|Y_1 +\ldots + Y_{N_{[nt]}(x_{s_n})}\big|^\alpha\right).
\end{equation}
Notice that by Jensen inequality, for any $\kappa>1$ we have
\begin{equation}
B_n^{\kappa} \leq r_n^{-\kappa \alpha} R_{[nt]}^{\kappa - 1}\left(\big|Y_1 +\ldots + Y_{N_{[nt]}(x_1)}\big|^{\alpha \kappa} + \ldots + \big|Y_1 +\ldots + Y_{N_{[nt]}(x_{s_n})}\big|^{\alpha \kappa}\right).
\end{equation}
Since the sequence $(Y_n)_{n\in \mathbb{N}}$ and the random walk are independent, by conditioning on the random walk, we get
\begin{eqnarray}
\mathbb{E}(B_n^\kappa)& \leq & r_n^{-\kappa \alpha} \sup_{k \in \mathbb{N}}\mathbb{E}\Bigg|\frac{Y_1+\ldots +Y_k}{k^{1/\gamma}}\Bigg|^{\alpha \kappa} \nonumber\\
&& \: \mathbb{E}\Big(R_{[nt]}^{\kappa -1}N_{[nt]}(x_1)^{\frac{\alpha \kappa}{\gamma}} +\ldots + N_{[nt]}(x_{s_n})^{\frac{\alpha \kappa}{\gamma}}\Big),
\end{eqnarray}
where $R_m=\sum_{x\in \mathbb{Z}}\mathbf{1}_{\{N_m(x)\neq 0\}}$ for $m\in \mathbb{N}$. We now claim that 
\begin{equation}\label{rbound}
r_n^{-\kappa \alpha}\mathbb{E}\left(R_{[nt]}^{\kappa-1}\sum_{k=1}^{R_{[nt]}} N_{[nt]}(x_k)^{\frac{\alpha \kappa}{\gamma}}\right) 
\end{equation}
is bounded uniformly in $n \in \mathbb{N}$ for all $\kappa>1$ sufficiently close to $0$. Using H\"{o}lder inequality with $p=\frac{\gamma}{\alpha \kappa}$ and $q=\frac{\gamma}{\gamma - \alpha \kappa}$ we see that~\eqref{rbound} is no bigger than
\begin{eqnarray}
&&r_n^{-\kappa \alpha}\mathbb{E}\Big(R_{[nt]}^{\kappa -1}\bigg(\sum_{x\in \mathbb{Z}}\mathbf{1}_{\{N_{[nt]}(x)\neq 0 \}}\bigg)^{\frac{\gamma - \alpha \kappa}{\gamma}}[nt]^\frac{\alpha \kappa}{\gamma}\Big)\nonumber\\
&=& r_n^{-\kappa \alpha}\mathbb{E}\Big(R_{[nt]}^{\frac{(\gamma - \alpha)\kappa}{\gamma}}\Big)[nt]^{\frac{\alpha \kappa}{\gamma}}\nonumber\\
&\leq& r_n^{-\kappa \alpha}\bigg(\mathbb{E}(R_{[nt]})\bigg)^{\frac{(\gamma - \alpha)\kappa}{\gamma}}\big(\sum_{x\in \mathbb{Z}}N_{[nt]}(x)\big)^{\frac{\alpha \kappa}{\gamma}}\nonumber\\
& =& r_n^{-\kappa \alpha}\bigg(\mathbb{E}(R_{[nt]})\bigg)^{\frac{(\gamma - \alpha)\kappa}{\gamma}}[nt]^{\frac{\alpha \kappa}{\gamma}}\label{r1},
\end{eqnarray}
where the inequality in~\eqref{r1} follows from H\"{o}lder inequality as long as $\kappa \leq \frac{\gamma}{\gamma - \alpha}$. By Lemma 1 in~\cite{KS}, $\mathbb{E}(R_{[nt]})\leq c_1 [nt]^{1/\beta}$ for some constant $c_1$ depending only on $\beta$. We thus conclude that~\eqref{rbound} can be bounded by
\begin{equation}
c_1 [nt]^{\frac{(\gamma-\alpha)\kappa}{\gamma \beta}}[nt]^{\frac{\alpha \kappa}{\gamma}}n^{-\frac{\kappa \alpha}{\gamma} - \frac{\kappa}{ \beta} + \frac{\kappa \alpha}{\gamma \beta}}, 
\end{equation}
which is bounded uniformly in $n \in \mathbb{N}$.
\end{proof}

The proof of Claim~\ref{C1} is the same as the proof of Lemma 3.4 in~\cite{DG} and, therefore, we skip it and proceed directly to the proof of~\ref{C2}.

\begin{proof}[Proof of Claim~\ref{C2}]
The proof presented here is very similar to the proof of Lemma 3.5 in~\cite{DG}. Recall that, by assumption, 
\begin{equation*}
\lambda(u) = \bar{\lambda}(u)+o(|u|^\alpha),
\end{equation*}
as $u\rightarrow 0$. Let
\begin{equation}
U_n(x):=r_n^{-1}\sum_{j=1}^k\theta_j \widetilde{N}_{[nt_j]}(x),\quad n\in \mathbb{N}, x\in \mathbb{Z}.
\end{equation}
It is easy to see that
\begin{multline}
\left|\prod_{x\in \mathbb{Z}}\lambda(c_n^{-1/\alpha}U_n(x))-\prod_{x\in \mathbb{Z}}\bar{\lambda}(c_n^{-1/\alpha}U_n(x))\right| \\
\leq \sum_{x\in \mathbb{Z}}\left|\lambda(c_n^{-1/\alpha}U_n(x))-\bar{\lambda}(c_n^{-1/\alpha}U_n(x))\right|.
\end{multline}
Therefore~\eqref{ueq} can be bounded by
\begin{equation}\label{ueq1}
c_n \mathbb{E}\left(\sum_{x\in \mathbb{Z}}\left|\lambda(c_n^{-1/\alpha}U_n(x))-\bar{\lambda}(c_n^{-1/\alpha}U_n(x))\right|\right).
\end{equation}
Define $g(v)= |v|^{-\alpha}|\lambda(v)-\bar{\lambda}(v)|$, for $v\neq 0$ and $g(0)=0$. Then $g$ is bounded and continuous. With this notation~\eqref{ueq1} equals
\begin{equation}\label{ueq2}
\mathbb{E}\left(\sum_{x\in \mathbb{Z}}|U_n(x)|^\alpha g(c_n^{-1/\alpha}U_n(x))\right).
\end{equation}
Fix any $\epsilon>0$ and choose $\delta>0$ such that $|z|<\delta$ implies $|g(z)|<\epsilon$. Then, ~\eqref{ueq2} can be bounded by
\begin{equation}
\epsilon \mathbb{E}\big(\sum_{x\in \mathbb{Z}}|U_n(x)|^\alpha\big)+\norm{g}_\infty \mathbb{E}\big(\sum_{x\in \mathbb{Z}}|U_n(x)|^\alpha \mathbf{1}_{\{c_n^{1/\alpha}|U_n(x)|\geq \delta\}}\big),
\end{equation}
which in turn is bounded by
\begin{equation}\label{ueq3}
\epsilon \mathbb{E}\bigg(\sum_{x\in \mathbb{Z}}|U_n(x)|^\alpha\bigg)+\norm{g}_\infty \mathbb{E}\bigg(\sum_{x\in \mathbb{Z}}|U_n(x)|^\alpha \mathbf{1}_{\{\sum_{x\in \mathbb{Z}}|U_n(x)|^\alpha\geq c_n \delta^\alpha\}}\bigg).
\end{equation}
Since, by Lemma~\ref{uilem} the sequence of random variables $(\sum_{x\in \mathbb{Z}}|U_n(x)|^\alpha)_{n\in \mathbb{N}}$ is uniformly integrable, the first sumand in~\eqref{ueq3} is bounded by $\epsilon$ times a constant independent of $n \in \mathbb{N}$ and the second converges to $0$ as $n\rightarrow \infty$. The choice of $\epsilon$ was arbitrary and hence the proof is finished.

\end{proof}

\begin{proof}[Proof of Claim~\ref{C3}]
First we are going to show that~\eqref{C3_B} holds. Without losing generality we may assume that $0\leq t_1\leq \ldots \leq t_k$. For convenience we also put $t_0=0$.

We can rewrite $\mathbb{E}(B_n)$ as
\begin{eqnarray*}
\int_\mathbb{R}&&\mathbb{E}\bigg|(\theta_1+\ldots \theta_k)Z^{(1)}(N_{[nt_1]([a_nx])})\Big(\frac{N_{[nt_1]([a_nx])}}{na_n^{-1}}\Big)^{1/\gamma} \\
&& \:+ (\theta_2+\ldots \theta_k)Z^{(2)}\big(N_{[nt_2]}([a_nx])-N_{[nt_1]}([a_nx])\big)\\
&& \: \times\Big(\frac{N_{[nt_2]}([a_nx])-N_{[nt_1]}([a_nx])}{na_n^{-1}}\Big)^{1/\gamma}\nonumber\\
&& \: + \ldots + \nonumber\\
&& \: + \theta_k Z^{(k)}\big(N_{[nt_k]}([a_nx])-N_{[nt_{k-1}]}([a_nx])\big)\\
&& \:\times\Big(\frac{N_{[nt_2]}([a_nx])-N_{[nt_1]}([a_nx])}{na_n^{-1}}\Big)^{1/\gamma}\bigg|^{\alpha}dx,\nonumber\\\nonumber
\end{eqnarray*}
where $Z^{(1)}(\cdot),\ldots,Z^{(k)}(\cdot)$ are i.i.d. copies of the sequence (we put $Z^{(j)}(0)=0$ for convenience)
\begin{equation}
Z^{(0)}(m)=\frac{1}{m^{1/\gamma}}\left(Y_1+\ldots Y_m\right),\quad m \in \mathbb{N},
\end{equation}
which are independent of the random walk $(S_n)$. By Skorochod representation theorem we may assume that for $j=1,\ldots,k$, $Z^{(j)}(m)$ converges almost surely to $Z^{(j)}$, which has $S\gamma S$ distribution and a the random variables $Z^{(j)}$ are independent. Let 
\begin{eqnarray}\label{qloc1}
C_n &=&\int_\mathbb{R}\bigg|(\theta_1+\ldots \theta_k)Z^{(1)}\times\Big(\frac{N_{[nt_1]}([a_nx])}{na_n^{-1}}\Big)^{1/\gamma} \\
&& \:+ (\theta_2+\ldots \theta_k)Z^{(2)} \times\Big(\frac{N_{[nt_2]}([a_nx])-N_{[nt_1]}([a_nx])}{na_n^{-1}}\Big)^{1/\gamma} \nonumber\\
&& \: + \ldots + \nonumber\\
&& \:+ \theta_kZ^{(k)} \times\Big(\frac{N_{[nt_k]}([a_nx])-N_{[nt_{k-1}]}([a_nx])}{na_n^{-1}}\Big)^{1/\gamma}\bigg|^\alpha \nonumber\\
\end{eqnarray}
We are going to show that $\mathbb{E}(B_n)-\mathbb{E}(C_n)$ converges to $0$ as $n\rightarrow\infty$. For that we will need the inequalities:
\begin{equation}\label{mloc1}
|a^\alpha - b^\alpha|\leq \alpha |a-b|(a^{\alpha-1}+b^{\alpha-1}), \alpha>1, a,b \geq 0,
\end{equation}
and
\begin{equation}
|a^\alpha - b^\alpha|\leq |a-b|^{\alpha}, 0\leq \alpha \leq 1, a,b \geq 0.
\end{equation}
Assume first that $\alpha>1$. Put
\begin{eqnarray*}
A &=&  \bigg|(\theta_1+\ldots \theta_k)Z^{(1)}(N_{[nt_1]([a_nx])})\Big(\frac{N_{[nt_1]([a_nx])}}{na_n^{-1}}\Big)^{1/\gamma} \\
&& \:+ (\theta_2+\ldots \theta_k)Z^{(2)}\big(N_{[nt_2]}([a_nx])-N_{[nt_1]}([a_nx])\big)\\
&& \: \times\Big(\frac{N_{[nt_2]}([a_nx])-N_{[nt_1]}([a_nx])}{na_n^{-1}}\Big)^{1/\gamma}\nonumber\\
&& \: + \ldots + \nonumber\\
&& \: + \theta_k Z^{(k)}\big(N_{[nt_k]}([a_nx])-N_{[nt_{k-1}]}([a_nx])\big)\\
&& \:\times\Big(\frac{N_{[nt_2]}([a_nx])-N_{[nt_1]}([a_nx])}{na_n^{-1}}\Big)^{1/\gamma}\bigg|,
\end{eqnarray*}
and
\begin{eqnarray*}
B &=& \bigg|(\theta_1+\ldots \theta_k)Z^{(1)}\times\Big(\frac{N_{[nt_1]}([a_nx])}{na_n^{-1}}\Big)^{1/\gamma} \\
&& \:+ (\theta_2+\ldots \theta_k)Z^{(2)} \times\Big(\frac{N_{[nt_2]}([a_nx])-N_{[nt_1]}([a_nx])}{na_n^{-1}}\Big)^{1/\gamma} \nonumber\\
&& \: + \ldots + \nonumber\\
&& \:+ \theta_kZ^{(k)} \times\Big(\frac{N_{[nt_k]}([a_nx])-N_{[nt_{k-1}]}([a_nx])}{na_n^{-1}}\Big)^{1/\gamma}\bigg|.
\end{eqnarray*}
Then by~\eqref{mloc1} and H\"{o}lder inquality
\begin{eqnarray*}
\mathbb{E}|a^\alpha - b^\alpha|&\leq& \alpha \mathbb{E}\big(|A-B|(A^{\alpha-1}+B^{\alpha-1})\big)\\
&\leq& \alpha \big(\mathbb{E}|A-B|^\alpha\big)^{1/\alpha}\Big(\big(\mathbb{E}A^{\alpha}\big)^{(\alpha-1)/\alpha}+\big(\mathbb{E}B^{\alpha}\big)^{(\alpha-1)/\alpha}\Big).
\end{eqnarray*}
By triangle inequality 
\begin{eqnarray}
|A-B|&\leq& \sum_{j=1}^k \big|\theta_j +\ldots + \theta_k\big| \bigg| \Big(Z^{(j)}\big(N_{[nt_j]}([a_nx])-N_{[nt_{j-1}]}([a_nx])\big)-Z^{(j)}\Big)\nonumber\\
&& \: \times \bigg(\frac{N_{[nt_j]}([a_nx])-N_{[nt_{j-1}]}([a_nx])}{na_n^{-1}}\bigg)^{1/\gamma}\bigg|.
\end{eqnarray}
Notice that by~\eqref{COND} the sequence of random variables 
\begin{equation}
\Big(\Big|\frac{Y_1+\ldots Y_n}{n^{1/\gamma}}\Big|^\alpha\Big)_{n\geq 1}
\end{equation}
is uniformly integrable and hence, by conditioning on the random walk and using triangle inequality once again (now for the $\alpha$-norm of a random variable), we conclude that
\begin{eqnarray}
\big(\mathbb{E}|A-B|^\alpha\big)^{1/\alpha} &\leq&  \sum_{j=1}^k \big|\theta_j +\ldots + \theta_k\big|\\
&& \: \times \bigg(\mathbb{E}\Big|f\Big(N_{[nt_j]}([a_nx])-N_{[nt_{j-1}]}([a_nx])\Big)\\
&& \: \times \Big(\frac{N_{[nt_j]}([a_nx])-N_{[nt_{j-1}]}([a_nx])}{na_n^{-1}}\Big)^{1/\gamma}\bigg|^\alpha\bigg)^{1/\alpha}
\end{eqnarray}
where $f:\mathbb{N}\cup\{0\}\rightarrow \mathbb{R}_+$ is a bounded function such that $\lim_{m\rightarrow \infty}f(m)=0$. Using~\eqref{COND} again one can easily notice that both $\mathbb{E}A^{\alpha}$ and $\mathbb{E}B^{\alpha}$ can be bounded by 
\begin{equation}
c_1\mathbb{E}\Big(\frac{N_{[nt_k]([a_nx])}}{na_n^{-1}}\Big)^{\alpha/\gamma}
\end{equation}
for some finite constant $c_1$ independent of $n$. Thus, to show that $|\mathbb{E}(B_n)-\mathbb{E}(C_n)|$ goes to zero as $n\rightarrow\infty$ it remains to prove that for any $j=1,\ldots,k$ 
\begin{eqnarray}\label{mloc2}
\int_\mathbb{R} &&\bigg(\mathbb{E}\Big(f\big(N_{[nt_j]}([a_nx])-N_{[nt_{j-1}]}([a_nx])\big)^\alpha\nonumber\\
&& \: \times \Big(\frac{N_{[nt_j]}([a_nx])-N_{[nt_{j-1}]}([a_nx])}{na_n^{-1}}\Big)^{\alpha/\gamma}\Big)\bigg)^{1/\alpha} \nonumber\\
&& \: \times \bigg(\mathbb{E}\Big(\frac{N_{[nt_k]([a_nx])}}{na_n^{-1}}\Big)^{\alpha/\gamma}\bigg)^{(\alpha-1)/\alpha}dx
\end{eqnarray}
converges to $0$ as $n\rightarrow \infty$. The integrand in~\eqref{mloc2} is bounded by the function

\begin{equation}
x \mapsto c_2\mathbb{E}\Big(\frac{N_{[nt_k]([a_nx])}}{na_n^{-1}}\Big)^{\alpha/\gamma},
\end{equation}
for some constant $c_2$ independent of $n$. It follows from the proof of Lemma 6 in~\cite{KS} that for any $K>0$ and $t>0$
\begin{equation}
\int_{|x|>K}\Big(\frac{N_{[nt]([a_nx])}}{na_n^{-1}}\Big)^{\alpha/\gamma}dx
\end{equation}
converges in distribution to
\begin{equation}\label{qloc4}
\int_{|x|>K}L_t(x)^{\alpha/\gamma}dx
\end{equation}
were $(L_t(x))_{t\geq 0, \in \mathbb{R}}$ is a jointly continuous version of local time of symmetric a $\beta$-stable L\'{e}vy process. By Lemma 3.3 in~\cite{DG} the convergence holds also in $L^1(\Omega)$. Since the expected value of~\eqref{qloc4} converges to $0$ as $K\rightarrow \infty$ (see Lemma 2.1 in~\cite{DG}), we see that by choosing $K$ large enough,
\begin{equation}
\int_{|x|>K}\mathbb{E}\Big(\frac{N_{[nt_k]([a_nx])}}{na_n^{-1}}\Big)^{\alpha/\gamma}dx
\end{equation}
can be made arbitrarily small for all $n$ large enough. Thus it remains to show that for any $K>0$
\begin{eqnarray}\label{mloc3}
\int_{|x|\leq K} &&\bigg(\mathbb{E}\Big(f\big(N_{[nt_j]}([a_nx])-N_{[nt_{j-1}]}([a_nx])\big)^\alpha\nonumber\\
&& \: \times \Big(\frac{N_{[nt_j]}([a_nx])-N_{[nt_{j-1}]}([a_nx])}{na_n^{-1}}\Big)^{\alpha/\gamma}\Big)\bigg)^{1/\alpha} \nonumber\\
&& \: \times \bigg(\mathbb{E}\Big(\frac{N_{[nt_k]([a_nx])}}{na_n^{-1}}\Big)^{\alpha/\gamma}\bigg)^{(\alpha-1)/\alpha}dx
\end{eqnarray}
converges to zero as $n\rightarrow \infty$. This is relatively easy and we will only sketch the idea. Fix any $r>0$ and $j=1,\ldots,k$. The integral in~\eqref{mloc3} can be written as a sum of two integrals $I_1$, $I_2$ depending on whether
\begin{equation}
\frac{N_{[nt_j]([a_nx])}-N_{[nt_{j-1}]([a_nx])}}{na_n^{-1}}
\end{equation}
is greater than $r$ or not. In the first case, taking $n$ sufficiently large, the integrand can be bounded by an arbitrarily small constant (in this case $N_{[nt_j]([a_nx])}-N_{[nt_{j-1}]([a_nx])}$ must be large since $na_n^{-1} \rightarrow \infty$). In the second case  we simply bound the integrand by  
\begin{equation}
r^{1/\gamma}\Big(\frac{N_{[nt_k]([a_nx])}}{na_n^{-1}}\Big)^{(\alpha-1)/\gamma}
\end{equation}
and the corresponding integral (again by Lemma 3.3 in~\cite{DG}) can be bounded from above by a constant independent of $n$ times $c^{1/\gamma}$. Choosing $r$ small in the first place gives us what was needed. The case $0\leq \alpha \leq 1$ is very similar and we skip the proof.
\\

Now, by the stability and independence of $Z^{(1)},\ldots,Z^{(k)}$, $\mathbb{E}(C_n)$ is equal to
\begin{eqnarray}\label{qloc7}
\sum_{x\in \mathbb{Z}}&&r_n^{-\alpha}\Big||\theta_1+\ldots \theta_k|^\gamma N_{[nt_1]}(x)\label{bsum1} \\
&& \:+ |\theta_2+\ldots \theta_k|^\gamma \big(N_{[nt_2]}(x)-N_{[nt_1]}(x)\big)\nonumber\\
&& \: + \ldots + \nonumber\\
&& \: + |\theta_k|^\gamma \big(N_{[nt_k]}(x)-N_{[nt_{k-1}]}(x)\big)\Big|^{\alpha/\gamma}\Bigg) \mathbb{E}\big(|Y_1|^{\alpha}\big) \nonumber
\end{eqnarray}
By Lemmas 3.2 ad 3.3 in~\cite{DG}, ~\eqref{qloc7} converges as $n\rightarrow \infty$, to 
\begin{eqnarray}\label{Xfdd}
\int_{\mathbb{R}}\mathbb{E}\Bigg|\sum_{j=1}^k \big(|\theta_j+\ldots +\theta_k|^\gamma - |\theta_{j+1}+\ldots +\theta_k|^\gamma\big)L_{t_j}(x)\Bigg|^\alpha dx,
\end{eqnarray}
which finishes the proof of~\eqref{C3_B}. Now let us turn to~\eqref{bf}. Define $f_n(x):=c_n(1-exp(-c_n^{-1}(x)))$ for $x\in \mathbb{R}, n \in \mathbb{N}$. Then, ~\eqref{bf} is equivalent to
\begin{equation}\label{bfeq}
\lim_{n\rightarrow \infty}\mathbb{E}f_n(B_n)=\mathbb{E}B.
\end{equation}
We can write, for $\delta>0$
\begin{eqnarray}
\mathbb{E}f_n(B_n)&=& \mathbb{E}\Big(f_n(B_n)\mathbf{1}_{\{|B_n|>c_n^\delta\}}\Big)+\mathbb{E}\Big(f_n(B_n)\mathbf{1}_{\{|B_n|\leq c_n^\delta\}}\Big)\label{bfb}\\
&=& I_1+\mathbb{E}\Bigg(c_n\Big(1-\big(1-B_n/c_n+O((B_n/c_n)^2)\big)\Big)\mathbf{1}_{\{|B_n|\leq c_n^\delta\}}\Bigg),\nonumber
\end{eqnarray}
where (using $|f_n(x)|\leq |x|$ for all $x\in \mathbb{R}$ and $n\in \mathbb{N}$)
\begin{equation}
|I_2(x)| \leq \mathbb{E}\left(|B_n|\mathbf{1}_{\{|B_n|> c_n^\delta\}}\right),
\end{equation}
which converges to $0$ as $n\rightarrow \infty$ by the uniform integrability of $(B_n)_{n \geq 1}$. Using this, and taking $\delta<\frac{1}{2}$ we see that (again by the uniform integrability of $(B_n)_{n \geq 1}$)~\eqref{bfeq} holds. 
\end{proof}

\begin{appendices}

\section{}

\begin{proof}[Proof of Lemma~\ref{Alem}]
Take any $\kappa>1$ such that $\alpha \kappa<\gamma$. In the proof $c_1, c_2,\ldots$ will denote constants independent of $k$ and $\theta$. Since the random variable $Y_1$ is symmetric we may write (using Lemma 1.3 in~\cite{MATS})
\begin{equation}
m_k(\alpha \kappa) :=\mathbb{E}\left|\frac{Y_1+\ldots Y_k}{k^{1/\gamma}}\right|^{\alpha \kappa}= c_1 \int_0^\infty \frac{\phi_k'(-\theta)}{\theta^{\alpha \kappa}}d\theta,
\end{equation}
for some constant $c_1$ which depends only on $\alpha$ and $\kappa$. Here $\phi_k$ denotes the characteristic function of $(1/k^{1/\gamma})(Y_1+\ldots Y_k)$. Recall that by $\phi$ we denote the characteristic function of $Y_1$. Since $Y_1$ in the domain of normal attraction of $Z_\gamma$ we conclude (see~\cite{GH} for proofs) that the function
\begin{equation}
\theta \mapsto 1-\phi(\theta)
\end{equation}
is regularly varying at $0$ with exponent $\gamma$ and in particular
\begin{equation}
\lim_{\theta \rightarrow 0}\frac{1-\phi(\theta)}{|\theta|^\gamma}=c_2,
\end{equation}
with $c_2$ being a finite positive constant depending only on $\gamma$. $m_k(\alpha \kappa)$ can be bounded by
\begin{equation}
c_1 \int_0^\infty \frac{|\phi_k'(\theta)|}{\theta^{\alpha \kappa}}d\theta
\end{equation}
which after a change of variables equals
\begin{equation}\label{aloc1}
c_1 \int_0^\infty \frac{|\phi'(\theta)|\big(1-(1-\phi(\theta))\big)^{k-1}}{\theta^{\alpha \kappa}}k^{1-(\alpha \kappa)/\gamma}d\theta.
\end{equation}
Fix $c>0$ such that 
\begin{eqnarray*}
1-\phi(\theta) &\geq& c_3 |\theta|^\gamma,\\
|\phi'(\theta)| &\leq& c_4|\theta|^{\gamma-1},
\end{eqnarray*}
for $|\theta|\leq c$ and some positive constants $c_3$, $c_4$. The integral in~\eqref{aloc1} can be written as $I_1$ + $I_2$, where $I_1$ and $I_2$ are integrals over $(0,c)$ and $(c,\infty)$ respectively. First, notice that 
\begin{equation}
I_1 \leq c_5\int_0^c \frac{|\theta|^{\gamma-1}(1-c_3|\theta|^\gamma)^{k-1}}{\theta^{\alpha \kappa}}k^{1-(\alpha \kappa)/\gamma}d\theta.
\end{equation} 
Since for $z$ close to zero $1-z \sim exp(-z)$, $I_1$ is no bigger than
\begin{equation}
c_6\int_0^c \theta^{\gamma-1-\alpha \kappa}exp(-(k-1)\theta^\gamma)k^{1-(\alpha \kappa)/\gamma}d\theta.
\end{equation}
Changing variables $\theta = (k-1)^{1/\gamma}\theta$ and using Theorem 10.5.6 in~\cite{SPLRD} we conclude that $\limsup_{k\rightarrow\infty}I_1<\infty$. The fact that for any $c>0$, $I_2$ is bounded uniformly in $k \in  \mathbb{N}$ follows directly from the assumptions of Lemma~\ref{Alem}

\end{proof}
\end{appendices}

\bibliographystyle{plain}

\bibliography{publications}

\end{document}